\newtheorem{thm}{Theorem}[section]
\newtheorem{cor}[thm]{Corollary}
\newtheorem{lem}[thm]{Lemma}
\newtheorem{prop}[thm]{Proposition}
\newtheorem{defn}[thm]{Definition}
\numberwithin{equation}{section}
\newcommand{\be}{\begin{equation}}
\newcommand{\ee}{\end{equation}}
\newcommand{\ben}{\begin{enumerate}}
\newcommand{\een}{\end{enumerate}}
\newcommand{\beq}{\begin{eqnarray}}
\newcommand{\eeq}{\end{eqnarray}}
\newcommand{\beqn}{\begin{eqnarray*}}
\newcommand{\eeqn}{\end{eqnarray*}}
\newcommand{\bpf}{\begin{proof}}
\newcommand{\epf}{\end{proof}}
\newcommand{\bl}{\begin{lem}}
\newcommand{\el}{\end{lem}}
\newcommand{\bp}{\begin{prop}}
\newcommand{\ep}{\end{prop}}
\newcommand{\bd}{\begin{defn}}
\newcommand{\ed}{\end{defn}}
\newcommand{\bt}{\begin{thm}}
\newcommand{\et}{\end{thm}}
\title {Convergence of Finslerian metrics under Ricci flow}
\author { M. Yar Ahmadi and B. Bidabad\footnote{The corresponding author}}
\date{\small Faculty of Mathematics and Computer Science,\\ Amirkabir University of Technology,\\ Tehran 15914, Iran.\\
m.yarahmadi@aut.ac.ir\\bidabad@aut.ac.ir}
\begin{document}
\maketitle
\begin{abstract}
In this work, convergence of evolving Finslerian metrics first in a general flow next under Finslerian Ricci flow is studied. More intuitively it is proved that a family of Finslerian metrics $g(t)$ which are solutions to the Finslerian Ricci flow converge in $C^{\infty}$ to a smooth limit Finslerian metric as $ t $ approaches the finite time $ T $. As a consequence of this result one can show that  in a compact  Finsler manifold the curvature tensor along Ricci flow blows up in short time.
\end{abstract}
{\bf Keywords:} Finsler geometry, Ricci flow, convergence in $C^{\infty}$, blow up.\\
\textbf{MSC(2010)}: Primary: 53C60; Secondary: 53C44,
35C08.
\section{Introduction}
Ricci flow is a branch of  general  geometric flows, which is an evolution equation for a Riemannian metric in the set of all Riemannian metrics defined on a manifold. Geometric flow can be used to deform an arbitrary metric into an informative metric, from which one can determine the topology of the underlying manifold and hence innovate numerous progress in the proof of some geometric conjectures. In 1982 Hamilton introduced the notion of Ricci flow on Riemannian manifolds by the evolution equation
\begin{align}\label{RRF}
\frac{\partial}{\partial t}g_{ij}=-2Ric_{ij}, \quad g(t=0):= g_0.
\end{align}
The Ricci flow, which evolves a Riemannian metric by its Ricci curvature is a natural analogue of the heat equation for metrics. In Hamilton's celebrated paper \cite{Ha}, it is shown that there is a unique solution to the Ricci flow for an arbitrary smooth Riemannian metric on a closed manifold over a sufficiently short time. The limiting solution obtained from these gives information about
the structure of the singularity. The idea is to consider shorter and shorter time intervals leading up to a singularity
of the Ricci fow, and to rescale the solution on each of these time intervals to obtain solutions on long time intervals with uniformly bounded curvature. In $ 1989 $ W.X. Shi has proved convergence of evolving metrics under Ricci flow, \cite{Shi}. Next he established estimates for the covariant derivatives of the curvature tensor on complete Riemannian manifolds. By using these estimates R. Hamilton \cite{Ha} has shown that any solution to the Ricci flow that develops a singularity in finite time must have unbounded curvature tensor. In fact, he proved the long time existence theorem for the Ricci flow as long as its curvature remains bounded. Also, N. Sesum has shown that any solution to the Ricci flow that develops a singularity in finite time must have unbounded Ricci curvature \cite{Sesum}.

 The concept of Ricci flow  on Finsler manifolds is defined first by D. Bao, cf., \cite{Ba}, choosing the Ricci tensor introduced by H. Akbar-Zadeh,  \cite{Ak3}. It seems to the present authors that, this choice of D. Bao for definition of Ricci tensor, is completely suitable for definition of Ricci flow in Finsler geometry. In fact, in order to define the concept of Ricci tensor,  Akbar-Zadeh has  used Einstein-Hilbert's functional in general relativity, although it  has some computation negligence and
 introduced definition of Einstein-Finsler spaces as critical points of this functional, similar to the Hamilton's work.

 In the present work, we prove that a family of Finslerian metrics $g(t)$ which are solution to the geometric flow $\frac{\partial}{\partial t}g(t)=\omega(t) $ converges in $C^{\infty}$ to a smooth limit Finslerian metric $ \bar{g} $ as $ t $ approaches $ T $.  Next, as an application in the special case, it is proved that on a compact manifold a family of solutions to the Finslerian Ricci flow $\frac{\partial}{\partial t}g_{_{jk}}{\footnotesize_{(t)}}=-2Ric_{jk}$ converges in $C^{\infty}$ to a smooth limit Finslerian metric $ \bar{g} $ as $ t $ approaches $ T $. This result may be used to show that in a  compact Finsler manifold Ricci flow cannot develop a singularity in finite time unless the $ hh- $curvature is bounded. These tools are indispensable for development of Ricci flow on Finsler geometry, specially on determination of interior estimation of some solutions.

\section{Preliminaries and terminologies}
  Let $M$ be a connected differentiable manifold of dimension $n$. Denote the bundle of tangent vectors of $M$ by
$p:TM\longrightarrow M$, the fiber bundle of non-zero tangent
vectors of $M$ by $\pi:TM_0\longrightarrow M$ and the
pulled-back tangent bundle by $\pi^*TM\longrightarrow TM_0$. A point of $TM_0$ is denoted
by $z=(x,y)$, where $x=\pi z\in M$ and $y\in T_{\pi z}M$. Let $(x^i)$ be a local chart with the domain $U\subseteq M$ and $(x^i,y^i)$ the induced local coordinates on $\pi^{-1}(U)$, where ${\bf y}=y^i\frac{\partial}{\partial x^i}\in T_{\pi z}M$, and $i$ running over the range $1,2,...,n$. A (globally defined) \emph{Finsler structure} on $M$ is a function $F:TM\longrightarrow [0,\infty)$ with the following properties;
 $F$ is $C^\infty$ on the entire slit tangent bundle $TM \backslash 0 $;
 $F(x,\lambda y )=\lambda F(x,y) ~ \forall \lambda >0$;
the $n\times n$ Hessian matrix $(g_{ij}) =\frac {1}{2} ([F^2]_{y^i y^j})$ is positive-definite at every point of $TM_0$.
The pair $(M,g)$ is called a {\it Finsler manifold}. Denote by  $TTM_0$ and $SM$ the tangent bundle of $TM_0$ and the sphere bundle respectively, where $SM:=\bigcup\limits _{x\in M} S_xM$ and $S_xM:=\{y\in T_xM | F(y)=1\}$.
Given the induced coordinates $(x^i,y^i)$ on $TM$,  coefficients of spray vector field are defined by $G^{i}=1/4 g^{ih}(\frac{\partial^{2}F^{2}}{\partial y^{h}\partial x^{j}}y^{j}-\frac{\partial F^{2}}{\partial x^{h}})$.
 One can observe that the pair $\{\delta/\delta x^i,\partial/\partial y^i  \}$ forms a horizontal and vertical frame for $TTM$,  where ${\frac {\delta}{\delta x^i}}:={\frac {\partial}{\partial x^i}}-G_i^j{\frac {\partial}{\partial y^j}}$, $G_i^j :={\frac {\partial G^j}{\partial y^i}}$.
There is a canonical linear mapping $\varrho:TTM_0\longrightarrow \pi^*TM,$ where, $\varrho=\pi_*$, $\varrho _z ((\frac {\delta}{\delta x^i})_z)=(\frac {\partial}{\partial x^i})_z$ and $\varrho ((\frac {\partial}{\partial y^i})_z)=0.$
  Let $V_zTM$ be the set of vertical vectors at $z\in TM_0$, that is, the set of vectors which are tangent to the fiber through $z$. Equivalently, $V_zTM=\ker \pi_*$
where $\pi_*:TTM_0\longrightarrow TM$ is the linear tangent mapping.
Let $\tilde{\nabla}$ be a linear connection on $\Gamma(\pi^*TM)$ the sections of pull back bundle $\pi^*TM$,
$$\tilde{\nabla}: T_zTM_0\times\Gamma(\pi^*TM)\longrightarrow\Gamma(\pi^*TM).$$
 There is a linear mapping
$\mu:TTM_0\longrightarrow \pi^*TM,$ defined by $\mu(\hat{X})=\tilde{\nabla}_{\hat{X}}{ v}$ where, $\hat{X}\in TTM_0$ and ${ v}$ is the canonical section of $\pi^*TM$.
The connection $\tilde{\nabla}$ is said to be {\it regular}, if $\mu$ defines an isomorphism between $VTM_0$ and
$\pi^*TM$. In this case, there is a horizontal distribution $HTM$ such that we have the Whitney sum
$
TTM_0=HTM\oplus VTM.
$
It can be shown that the set $\{\frac{\delta}{\delta x^j}\}$ and $\{\frac{\partial}{\partial y^j}\}$, forms a local frame field for the horizontal and vertical subspaces, respectively.
This decomposition permits to rewrite a vector field $\hat{X}\in TTM_0$, uniquely into the form $\hat{X}=H\hat{X}+V\hat{X}$. We will denote in the sequel all
the sections of $\pi^*TM$ by $X=\varrho(\hat X)$, $Y=\varrho(\hat Y)$,
 and the corresponding complete lift on $TTM_0$ by $\hat{X}, \hat{Y}$
respectively, unless otherwise specified.

 The {\it torsion} and {\it curvature} tensors of the regular connection $\tilde{\nabla}$ are given by
 \begin{eqnarray*}
\tau(\hat{X},\hat{Y})&=&\tilde{\nabla}_{\hat{X}}Y-\tilde{\nabla}_{\hat{Y}}X-\varrho[\hat{X},\hat{Y}],\\
\Omega(\hat{X},\hat{Y})Z&=&\tilde{\nabla}_{\hat{X}}\tilde{\nabla}_{\hat{Y}}Z-\tilde{\nabla}_{\hat{Y}}\tilde{\nabla}_{\hat{X}}Z
-\tilde{\nabla}_{[\hat{X},\hat{Y}]}Z,
\end{eqnarray*}
where, $X=\varrho(\hat{X})$, $Y=\varrho(\hat{Y})$, $Z=\varrho(\hat{Z})$ and $\hat{X}$, $\hat{Y}$ and $\hat{Y}$ are vector fields on $TM_0$.  They
determine two torsion tensors denoted here by $S$ and $T$ and three
curvature tensors denoted by $R$, $P$ and $Q$, defined by:
\begin{eqnarray*}
S(X,Y)&=&\tau(H\hat{X},H\hat{Y}),\ \ \ T(\dot{X},Y)=\tau(V\hat{X},H\hat{Y}),\\
R(X,Y)&=&\Omega(H\hat{X},H\hat{Y}),\ \ \ P(X,\dot{Y})=\Omega(H\hat{X},V\hat{Y}),\\
Q(\dot{X},\dot{Y})&=&\Omega(V\hat{X},V\hat{Y}),
\end{eqnarray*}
where, $X=\varrho(\hat{X})$,\ $Y=\varrho(\hat{Y})$,\
$\dot{X}=\mu(\hat{X})$ and $\dot{Y}=\mu(\hat{Y})$. The tensors $R$, $P$ and $Q$ are called $hh-$, $hv-$ and $vv-$curvature tensors, respectively. There is a unique metric compatible $h-$torsion free regular connection $\tilde{\nabla}$ associated to the Finsler structure $F$ satisfying, $\tilde{\nabla}_{\hat{Z}}g=0,$ $S(X,Y)=0,$ and $g(\tau(V\hat{X},\hat{Y}),Z)=g(\tau(V\hat{X},\hat{Z}),Y)$ called \emph{Cartan connection}. Here we denote by $ \nabla $ and $ \dot{\nabla} $, horizontal and vertical covariant derivatives of Cartan connection. We need the following properties of Cartan connection in the sequel, cf., \cite{AZ}.
\begin{align}\label{Eq;gogo}
2g(\nabla_{H\hat{X}}Y,Z)=&H\hat{X}.g(Y,Z)+H\hat{Y}.g(X,Z)-H\hat{Z}.g(X,Y)
\\&+g(\varrho[H\hat{X},H\hat{Y}],Z)+g(\varrho[H\hat{Z},H\hat{X}],Y)+g(\varrho[H\hat{Z},H\hat{Y}],X).\nonumber
\end{align}
For an arbitrary $ (0,2)- $tensor field $ S $ on $ \pi^*TM $ we have
\begin{align}\label{Def;derivative}
(\nabla_{H\hat{X}}S)(Y,Z)=H\hat{X}.S(Y,Z)-S(\nabla_{H\hat{X}}Y,Z)-S(Y,\nabla_{H\hat{X}}Z).
\end{align}
We consider also the \emph{reduced curvature tensor} $R^i_k$ which is expressed entirely in terms of the $x$ and $y$ derivatives of spray coefficients $G^i$, cf. \cite{Sh3}.
 \begin{align}\label{E,Ricci scalar}
 R^{i}_{k}:=\frac{1}{F^2}\big(2\frac{\partial G^i}{\partial x^k}-\frac{\partial^2 G^i}{\partial x^j \partial y^k}y^j +2G^j\frac{\partial^2 G^i}{\partial y^j \partial y^k} - \frac{\partial G^i}{\partial y^j}\frac{\partial G^j}{\partial y^k}\big).
\end{align}
In the general Finslerian setting, one of the remarkable definitions of Ricci tensors is  introduced by H. Akbar-Zadeh \cite{AZ} as follows.
\begin{equation*}
{Ric_{jk} :=[\frac{1}{2} F^{2} \mathcal{R}ic]_{y^{j} y^{k}}},
\end{equation*}
where
$
\mathcal{R}ic=R^{i}_{i}
$
and  $R^i_k$ is defined by (\ref{E,Ricci scalar}). As mentioned above the definition of Einstein-Finsler space related to this Ricci tensor is obtained as critical point of Einstein-Hilbert functional and hence suitable for definition of Finslerian Ricci flow. One of the advantages of the Ricci  quantity defined here  is its independence to the choice of the Cartan, Berwald or Chern(Rund) connections. Based on the Akbar-Zadeh's Ricci tensor, in analogy with the equation (\ref{RRF}), D. Bao has considered, the following natural extension of \emph{Ricci flow} in Finsler geometry, cf.  \cite{Ba},
\begin{align}\label{FRF}
\frac{\partial}{\partial t} g_{jk}=-2Ric_{jk}, \quad {g(t=0):=g_{_0}}.
\end{align}
This equation is equivalent to the following differential equation
\begin{align*}
{\frac{\partial}{\partial t}(\log F(t))=-\mathcal{R}ic,}\quad {F(t=0):=F_{_0}},
\end{align*}
where, $F_{_0}$ is the initial Finsler structure.
\section{Convergence of evolving Finslerian metrics}
Let $M$ be a compact differential manifold, $ F(t) $ a family of smooth $ 1- $parameter Finsler structures on $ M $ and $ g(t) $ the Hessian matrix of $ F(t) $ which defines a scalar product on $ \pi^*TM $ for each $ t $. Suppose that
\begin{align}\label{evolving}
\frac{\partial}{\partial t}g(t)=\omega(t), \quad g(0):=g_0,
\end{align}
where $ \omega(t):=\omega(t,x,y) $ is a family of symmetric $(0,2)$-tensors on $ \pi^*TM $, zero-homogenous with respect to $y$. For a $(p,q)$-tensor field $ \Omega $ define
\begin{align*}
|\Omega|^2_{_{g}}:=\Omega_{i_1...i_p}^{j_1...j_q}\Omega_{j_1...j_q}^{i_1...i_p}=g^{j_1l_1}\cdots g^{j_ql_q}g_{_{i_1k_1}}\cdots g_{_{i_pk_p}}\Omega_{l_1...l_q}^{k_1...k_p}\Omega_{j_1...j_q}^{i_1...i_p}.
\end{align*}
Let $u_0:[0,T)\longrightarrow [0,\infty)$ be a real function defined by
$u_0(t):=\sup\limits_{SM}|\omega(t)|_{_{g(t)}}$, for all $t\in [0,T)$ we have the following lemma.
\begin{lem}\label{A.1}
If $\int_0^Tu_0(t)dt<\infty $, then the Finslerian metrics $ g(t) $ are uniformly equivalent; that is, there exists a positive constant $C$ such that
$$\frac{1}{C}|v|^2_{_{g_{0}}}\le |v|^2_{_{g(t)}}\le C|v|^2_{_{g_{0}}},$$
for all points $((x,y),t)\in SM\times [0,T)$ and all vectors $v\in T_xM$.
\end{lem}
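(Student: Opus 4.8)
The plan is to reduce the statement to a one-dimensional Gr\"onwall-type estimate for the squared length of a fixed vector, carried out pointwise on the sphere bundle. Since $M$ is compact, $SM$ is compact, and because $\omega(t)$ is zero-homogeneous in $y$ the quantity $u_0(t)=\sup_{SM}|\omega(t)|_{g(t)}$ is a finite, well-defined function of $t$ alone; this is what makes the integral hypothesis meaningful. The statement is trivial for $v=0$, so I would fix a point $(x,y)\in SM$, a nonzero vector $v\in T_xM$, and set $\phi(t):=|v|^2_{g(t)}=g_{ij}(x,y,t)\,v^iv^j$. Note that $\phi(t)>0$ for every $t$ because each $g(t)$ is positive-definite.

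First I would differentiate $\phi$ in $t$. Using the evolution equation (\ref{evolving}) and the fact that $v$ and the base point $(x,y)$ are held fixed,
\begin{align*}
\phi'(t)=\frac{\partial}{\partial t}\big(g_{ij}(x,y,t)\big)\,v^iv^j=\omega_{ij}(x,y,t)\,v^iv^j=\omega(t)(v,v).
\end{align*}
The next step is a pointwise Cauchy--Schwarz estimate in the metric $g(t)$: diagonalizing $g(t)$ at $(x,y)$, equivalently working in a $g(t)$-orthonormal frame, turns $|\omega(t)|_{g(t)}$ into the Frobenius norm of the matrix $(\omega_{ij})$ and $\phi(t)$ into the Euclidean squared length of $v$, so that
\begin{align*}
|\omega(t)(v,v)|\le |\omega(t)|_{g(t)}\,|v|^2_{g(t)}\le u_0(t)\,\phi(t).
\end{align*}
Combining the two displays gives $|\phi'(t)|\le u_0(t)\,\phi(t)$, that is, $\big|\tfrac{d}{dt}\log\phi(t)\big|\le u_0(t)$.

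Finally I would integrate this differential inequality over $[0,t]$ to obtain
\begin{align*}
\Big|\log\frac{|v|^2_{g(t)}}{|v|^2_{g_0}}\Big|\le \int_0^t u_0(s)\,ds\le \int_0^T u_0(s)\,ds,
\end{align*}
and then set $C:=\exp\big(\int_0^T u_0(s)\,ds\big)$, which is finite precisely because $\int_0^T u_0(t)\,dt<\infty$. Exponentiating yields $C^{-1}|v|^2_{g_0}\le |v|^2_{g(t)}\le C|v|^2_{g_0}$, as claimed, with $C$ independent of $(x,y)$, $v$ and $t$. The only genuinely delicate point is the Cauchy--Schwarz step together with the Finslerian bookkeeping around it: one must keep the base point $(x,y)$ fixed throughout so that $g(t)$, $\omega(t)$ and the norm $|\cdot|_{g(t)}$ are all evaluated on the same fiber, and one must invoke the zero-homogeneity of $\omega$ to pass from this fiberwise bound to the uniform control $u_0(t)$ over all of $SM$. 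Everything else is routine.
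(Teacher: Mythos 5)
Your proposal is correct and follows essentially the same route as the paper's own proof: differentiate $|v|^2_{g(t)}$ pointwise, bound $|\omega(t)(v,v)|$ by $|\omega(t)|_{g(t)}\,|v|^2_{g(t)}$ via Cauchy--Schwarz (the paper writes this in indices as $(\omega_{ij}\omega^{ij})^{1/2}(v^iv^jv_iv_j)^{1/2}$, you via a $g(t)$-orthonormal frame -- the same estimate), then integrate the resulting bound on $\frac{d}{dt}\log|v|^2_{g(t)}$ and take $C=\exp\bigl(\int_0^T u_0(\tau)\,d\tau\bigr)$, exactly as in the paper.
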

\begin{proof}
Fix a point $((x,y),t)\in SM\times [0,T)$ and a non-zero vector $v\in T_xM$. Clearly, the assertion holds for $ v=0 $. Then by means of Cauchy-Schwarz inequality
\begin{align*}
|\frac{d}{dt}|v|^2_{_{g(t)}}|&=|\frac{d}{dt}(v^iv_i)|=|\frac{d}{dt}(g_{ij}v^iv^j)|=|\omega_{ij}v^iv^j|\le (\omega_{ij}\omega^{ij})^{\frac{1}{2}}(v^iv^jv_iv_j)^{\frac{1}{2}}\\&=(|\omega(t)|^2_{_{g(t)}})^{\frac{1}{2}}(|v|^2_{_{g(t)}}|v|^2_{_{g(t)}})^{\frac{1}{2}}= |\omega(t)|_{_{g(t)}}|v|^2_{_{g(t)}}\le u_0(t)|v|^2_{_{g(t)}}.
\end{align*}
Therefore
\begin{align*}
-u_0(t)|v|^2_{_{g(t)}}\leq\frac{d}{dt}|v|^2_{_{g(t)}}\leq u_0(t)|v|^2_{_{g(t)}},
\end{align*}
and we have
$-u_0(t)\leq\frac{\frac{d}{dt}|v|^2_{_{g(t)}}}{|v|^2_{_{g(t)}}}\leq u_0(t)$. We conclude that
\begin{align*}
-u_0(t)\leq\frac{d}{dt}\ln |v|^2_{_{g(t)}}\leq u_0(t).
\end{align*}
By integration we have
\begin{align*}
-\int_0^Tu_0(\tau)d\tau\leq-\int_0^tu_0(\tau)d\tau\leq \ln |v|^2_{_{g(t)}}-\ln |v|^2_{_{g(0)}}\leq \int_0^tu_0(\tau)d\tau\leq \int_0^Tu_0(\tau)d\tau.
\end{align*}
Thus,
\begin{align*}
-\int_0^Tu_0(\tau)d\tau\leq \ln \frac{|v|^2_{_{g(t)}}}{|v|^2_{_{g(0)}}}\leq \int_0^Tu_0(\tau)d\tau.
\end{align*}
Finally, we have
\begin{align*}
|v|^2_{_{g(0)}}e^{-\int_0^Tu_0(\tau)d\tau}\leq |v|^2_{_{g(t)}}\leq |v|^2_{_{g(0)}}e^{\int_0^Tu_0(\tau)d\tau}.
\end{align*}
By means of the assumption $\int_0^Tu_0(t)dt<\infty$ we put $ C= e^{\int_0^Tu_0(\tau)d\tau}$. This completes the proof.
\end{proof}
 The following proposition allows us to relate the time dependent horizontal covariant derivative of Cartan connection $\nabla$ to a fixed background Finsler connection $D$ defined on $ \pi^{*}TM $.
\begin{prop}\label{A.2}
Let (M,g) be a Finsler space, $\nabla$ the horizontal covariant derivative of Cartan connection associated to the metric $ g $. Moreover, assume that $D$ is a fixed horizontal covariant derivative of a torsion free linear connection on $ \pi^*TM $. Then,
\begin{align*}
\nabla_{H\hat{X}}Y={D}_{H\hat{X}}Y+\Gamma{(H\hat{X},H\hat{Y})},
\end{align*}
where $ \Gamma{(H\hat{X},H\hat{Y})} $ is a symmetric $ (0,2)- $tensor field defined by
\begin{align*}
2g(\Gamma{(H\hat{X},H\hat{Y})},Z)=({D}_{H\hat{X}}g)(Y,Z)+
({D}_{H\hat{Y}}g)(X,Z)-({D}_{H\hat{Z}}g)(X,Y).
\end{align*}
\begin{proof}
Torsion freeness of $ {D} $ reads $ \varrho[\hat{X},\hat{Y}]={D}_{\hat{X}}Y-{D}_{\hat{Y}}X $. Replacing this relation in (\ref{Eq;gogo}) yields
\begin{align*}
2g(\nabla_{H\hat{X}}Y,Z)=&H\hat{X}.g(Y,Z)+H\hat{Y}.g(X,Z)-H\hat{Z}.g(X,Y)\\&+g({D}_{H\hat{X}}Y-{D}_{H\hat{Y}}X,Z)+g({D}_{H\hat{Z}}X-{D}_{H\hat{X}}Z,Y)\\&
+g({D}_{H\hat{Z}}Y-{D}_{H\hat{Y}}Z,X).
\end{align*}
By simplification and making use of (\ref{Def;derivative}) we get
\begin{align*}
2g(\nabla_{H\hat{X}}Y,Z)=&(D_{H\hat{X}}g)(Y,Z)+
(D_{H\hat{Y}}g)(X,Z)\\&-(D_{H\hat{Z}}g)(X,Y)+2g(D_{H\hat{X}}Y,Z).
\end{align*}
 Using definition of $ \Gamma{(H\hat{X},H\hat{Y})} $ we conclude that
\begin{align*}
2g(\nabla_{H\hat{X}}Y,Z)=&2g(\Gamma{(H\hat{X},H\hat{Y})},Z)+2g(D_{H\hat{X}}Y,Z).
\end{align*}
Finally we have
\begin{align*}
g(\nabla_{H\hat{X}}Y,Z)=&g(\Gamma{(H\hat{X},H\hat{Y})}+D_{H\hat{X}}Y,Z).
\end{align*}
This equality holds for the arbitrary section $ Z $ of $ \pi^*TM $, hence
\begin{align*}
\nabla_{H\hat{X}}Y=D_{H\hat{X}}Y+\Gamma{(H\hat{X},H\hat{Y})}.
\end{align*}
As we have claimed.
\end{proof}
 \end{prop}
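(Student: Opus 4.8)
The plan is to treat this as the Finslerian analogue of the classical Riemannian fact that the difference between two torsion-free connections, one of which is metric compatible, is governed entirely by the covariant derivative of the metric. The only structural inputs available are the Koszul-type identity (\ref{Eq;gogo}) characterizing the Cartan connection $\nabla$, together with the defining relation (\ref{Def;derivative}) for the horizontal covariant derivative of a $(0,2)$-tensor. My strategy is to rewrite the right-hand side of (\ref{Eq;gogo}) so that every term is expressed through $D$ and $Dg$ alone, and then to read off the correction tensor $\Gamma$.

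First I would exploit that $D$ is torsion free. The vanishing-torsion condition applied to the horizontal lifts reads $\varrho[H\hat{X},H\hat{Y}]=D_{H\hat{X}}Y-D_{H\hat{Y}}X$, with the obvious cyclic analogues for the two remaining bracket terms appearing in (\ref{Eq;gogo}). Substituting these three identities eliminates every Lie-bracket term in favour of expressions of the form $g(D_{H\hat{\cdot}}\,\cdot\,,\,\cdot\,)$. Next I would dispose of the three directional-derivative terms $H\hat{X}.g(Y,Z)$ and its companions: by (\ref{Def;derivative}) each expands as $(D_{H\hat{X}}g)(Y,Z)+g(D_{H\hat{X}}Y,Z)+g(Y,D_{H\hat{X}}Z)$.

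Once both substitutions are inserted, the heart of the computation is a bookkeeping check. After collecting terms, the purely connection-theoretic pieces $g(D_{H\hat{\cdot}}\,\cdot\,,\,\cdot\,)$ coming from the two expansions must cancel in pairs, using the symmetry of $g$ to match $g(Y,D_{H\hat{X}}Z)$ with $g(D_{H\hat{X}}Z,Y)$ and so on, leaving exactly $2g(D_{H\hat{X}}Y,Z)$; simultaneously, the three surviving metric-derivative terms assemble into $(D_{H\hat{X}}g)(Y,Z)+(D_{H\hat{Y}}g)(X,Z)-(D_{H\hat{Z}}g)(X,Y)$. Organizing this cancellation cleanly—tracking the signs and keeping straight which argument carries the derivative—is the only step requiring genuine care; everything else is bookkeeping.

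Finally I would recognize the surviving metric-derivative combination as $2g(\Gamma(H\hat{X},H\hat{Y}),Z)$ by the stated definition of $\Gamma$, so that $2g(\nabla_{H\hat{X}}Y,Z)=2g\big(\Gamma(H\hat{X},H\hat{Y})+D_{H\hat{X}}Y,\,Z\big)$. Since $g$ is nondegenerate and $Z$ is an arbitrary section of $\pi^*TM$, this forces $\nabla_{H\hat{X}}Y=D_{H\hat{X}}Y+\Gamma(H\hat{X},H\hat{Y})$. The claimed symmetry of $\Gamma$ in its two arguments is then immediate from the symmetry of $g$, which interchanges the first two metric-derivative terms and fixes the third. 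I anticipate no substantive obstacle here; the principal risk is a sign or index slip in the cancellation, not any conceptual difficulty.
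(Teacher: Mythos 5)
Your proposal is correct and follows essentially the same route as the paper's proof: substitute the torsion-freeness identity $\varrho[H\hat{X},H\hat{Y}]=D_{H\hat{X}}Y-D_{H\hat{Y}}X$ (and its cyclic analogues) into the Koszul-type formula (\ref{Eq;gogo}), expand the directional-derivative terms via (\ref{Def;derivative}), cancel the paired connection terms to isolate $2g(D_{H\hat{X}}Y,Z)$ plus the three $Dg$-terms defining $\Gamma$, and conclude by nondegeneracy of $g$ since $Z$ is arbitrary. The cancellation you outline checks out exactly as in the paper, and your closing remark on the symmetry of $\Gamma$ is a small addition the paper leaves implicit.
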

 Here, we denote by $\nabla^mA$ and $D^mA$ the $ m^{th} $ order iterated horizontal Cartan covariant derivative of the tensor $ A $. Let $A$ and $B$ be two tensor fields defined on $ \pi^*TM $. We denote by $A\ast B$ any linear combination of these tensors obtained by the tensor product $ A\otimes B $ and any of the following operations;\\ I. summation over pairs of matching upper and lower indices;\\ II. contraction on upper indices with respect to the metric $ g $;\\ III. contraction on lower indices with respect to the inverse metric of $ g $.
 \begin{lem}\label{A.3}
Let $\nabla$ denote horizontal covariant derivative in Cartan connection associated to the metric $g(t)$ and $D$ a fixed background torsion free connection. For any positive integer $m$ we have
\begin{align*}
\nabla^m\omega(t)-D^m\omega(t)=&\sum_{l=0}^{m-1}\sum_{ i_1+...+i_q=m-l}D^{i_1}g(t)\ast ...\ast D^{i_q}g(t)\ast D^l\omega(t).
\end{align*}
.
\begin{proof}
The proof is by induction on $m$. By means of (\ref{Def;derivative}) and the Proposition \ref{A.2} we get
\begin{align*}
\nabla_{H\hat{X}}(\omega(t))(Y,Z)-&D_{H\hat{X}}(\omega(t))(Y,Z)\\&=-\omega(t)(\nabla_{H\hat{X}}Y-D_{H\hat{X}}Y,Z)-\omega(t)(Y,\nabla_{H\hat{X}}Z-D_{H\hat{X}}Z)\\
&=-\omega(t)(\Gamma(H\hat{X},H\hat{Y}),Z)-\omega(t)(Y,\Gamma(H\hat{X},H\hat{Z})).
\end{align*}
By definition of $ \Gamma(H\hat{X},H\hat{Y}) $ we obtain
$$\nabla\omega(t)-D\omega(t)=Dg(t)\ast\omega(t).$$
Hence, the assertion holds for $m=1$. Now assume that $m\geq 2$ and the following relation holds.
\begin{align*}
\nabla^{m-1}\omega(t)-D^{m-1}\omega(t)&=\\&\hspace{-1.7cm}\sum_{l=0}^{m-2}\sum_{i_1+...+i_q=m-l-1}D^{i_1}g(t)\ast...\ast D^{i_q}g(t)\ast D^l\omega(t).
\end{align*}
Horizontal covariant derivative of this equation implies
\begin{align}\label{AAA}
\nabla\nabla^{m-1}\omega(t)-\nabla D^{m-1}\omega(t)=&\\&\nonumber\hspace{-2.7cm}\sum_{l=0}^{m-2}\sum_{i_1+...+i_q=m-l-1}D^{i_1}g(t)\ast...\ast D^{i_q}g(t)\ast \nabla D^l\omega(t)\\&\nonumber\hspace{-3cm}+\sum_{l=0}^{m-2}\sum_{i_1+...+i_q=m-l-1}D^{i_1}g(t)\ast ...\ast \nabla D^{i_q}g(t)\ast D^l\omega(t).
\end{align}
Moreover, it follows from Proposition \ref{A.2} that
\begin{align}\label{AAB}
\nabla D^l\omega(t)=DD^l\omega(t)+Dg(t)\ast D^l\omega(t),
\end{align}
and
\begin{align}\label{AAC}
\nabla D^jg(t)=DD^jg(t)+Dg(t)\ast D^jg(t).
\end{align}
 Replacing (\ref{AAB}) and (\ref{AAC}) in (\ref{AAA}), leads
\begin{align*}
\nabla^m\omega(t)-D^m\omega(t)&=\sum_{l=0}^{m-1}\sum_{i_1+...+i_q=m-l}D^{i_1}g(t)\ast ...\ast D^{i_q}g(t)\ast D^l\omega(t).
\end{align*}
As we have claimed.
\end{proof}
 \end{lem}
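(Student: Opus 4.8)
The plan is to argue by induction on $m$, the whole engine being Proposition \ref{A.2}, which lets me trade a horizontal Cartan derivative $\nabla$ for the background derivative $D$ at the cost of a term that is schematically $Dg(t)\ast(\,\cdot\,)$. For the base case $m=1$ I would apply the defining formula (\ref{Def;derivative}) for the covariant derivative of a $(0,2)$-tensor to both $\nabla\omega(t)$ and $D\omega(t)$ and subtract. The directional-derivative term $H\hat{X}.\omega(t)(Y,Z)$ is common to both and cancels, leaving only the connection-difference contributions $\omega(t)(\nabla_{H\hat{X}}Y-D_{H\hat{X}}Y,Z)$ and $\omega(t)(Y,\nabla_{H\hat{X}}Z-D_{H\hat{X}}Z)$. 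By Proposition \ref{A.2} each of these differences is a $\Gamma$-term, and since $\Gamma(H\hat{X},H\hat{Y})$ is assembled from $Dg(t)$ through the formula in that proposition, this yields $\nabla\omega(t)-D\omega(t)=Dg(t)\ast\omega(t)$, which is precisely the asserted expression in the case $m=1$ (only $l=0$, $q=1$, $i_1=1$).

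For the inductive step I would assume the identity for $m-1$ and apply $\nabla$ to both sides. On the left this produces $\nabla^m\omega(t)-\nabla D^{m-1}\omega(t)$; on the right the Leibniz rule for $\nabla$ with respect to the $\ast$-product distributes the derivative across each factor $D^{i_j}g(t)$ and across $D^l\omega(t)$. The substitutions I need are the two conversion identities that follow at once from Proposition \ref{A.2}, namely $\nabla D^l\omega(t)=DD^l\omega(t)+Dg(t)\ast D^l\omega(t)$ and $\nabla D^j g(t)=DD^j g(t)+Dg(t)\ast D^j g(t)$. Plugging these in turns every $\nabla$ appearing on the right into a $D$ plus a correction of the form $Dg(t)\ast(\,\cdot\,)$, and the same conversion applied to $D^{m-1}\omega(t)$ supplies the stray term $\nabla D^{m-1}\omega(t)$ on the left. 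Reassembling, each operation does exactly one of three things: it raises a single differentiation order $i_j$ by one through $DD^{i_j}g(t)$, it creates a fresh factor $Dg(t)$, or it raises the order of $D^l\omega(t)$; in every case the total differentiation order climbs from $m-1$ to $m$, so the collected terms fall into the family $\sum_{l=0}^{m-1}\sum_{i_1+\cdots+i_q=m-l}D^{i_1}g(t)\ast\cdots\ast D^{i_q}g(t)\ast D^l\omega(t)$.

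The main obstacle I anticipate is purely the bookkeeping of the multi-index sums: I must verify that when $\nabla$ acts term-by-term the constraint $i_1+\cdots+i_q=m-l$ is reproduced from the hypothesis constraint $i_1+\cdots+i_q=m-l-1$ plus the single extra order of differentiation, and that the newly created $Dg(t)$ factors together with the shifted indices account for exactly the right schematic terms with nothing of too high or too low total order left over. Because $\ast$ denotes an unspecified linear combination of contractions rather than a fixed tensor, the argument is structural rather than quantitative: I would not track any coefficients, only confirm that each term generated lies in the claimed schematic class and that the total order of differentiation is preserved at $m$ throughout.
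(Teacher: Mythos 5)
Your proposal is correct and takes essentially the same route as the paper: induction on $m$, with the base case obtained by subtracting the two instances of (\ref{Def;derivative}) so that the directional-derivative term cancels and the $\Gamma$-terms of Proposition \ref{A.2} (which are built from $Dg(t)$) give $\nabla\omega(t)-D\omega(t)=Dg(t)\ast\omega(t)$, and the inductive step carried out by differentiating the hypothesis and substituting the conversion identities $\nabla D^l\omega(t)=DD^l\omega(t)+Dg(t)\ast D^l\omega(t)$ and $\nabla D^jg(t)=DD^jg(t)+Dg(t)\ast D^jg(t)$, exactly as in (\ref{AAA})--(\ref{AAC}). Your explicit remark that the stray term $\nabla D^{m-1}\omega(t)$ on the left must itself be converted is the same bookkeeping the paper performs implicitly through (\ref{AAB}).
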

For simplicity, let $ D $ be a horizontal Cartan covariant derivative associated to the metric $ g(0) $. For all integers $m\geq 1$, we define the continuous functions $u_m:[0,T)\longrightarrow \mathbb{R}$ and $\hat{u}_m:[0,T)\longrightarrow \mathbb{R}$ by
\begin{align}\label{Def;sup}
u_m(t):=\sup \limits_{SM} \mid \nabla^m\omega(t)\mid_{_{g(t)}},\qquad  \hat{u}_m(t):=\sup \limits_{SM} \mid D^m\omega(t)\mid_{_{g(0)}},
\end{align}
 for each $t\in[0,T)$.
By integration of $ \frac{\partial}{\partial t}g(t)=\omega(t) $ we have
 \begin{align*}
g(t)=\int_0^{t}\omega(\tau)d\tau+C.
 \end{align*}
 Putting $ t=0 $ in this relation, yields $ g(0)=C $. We conclude that
 \begin{align}\label{Evol}
g(t)=g(0)+\int_0^t \omega (\tau)d\tau.
 \end{align}
 Now by using the last equation and $Dg(0)=0$, we have
 \begin{align*}
 |D^mg(t)|_{_{g(0)}}=|\int_0^tD^m\omega(\tau)d\tau|_{_{g(0)}}\leq \int_0^t|D^m\omega(\tau)|_{_{g(0)}}d\tau.
 \end{align*}
Therefore
\begin{align*}
 \sup\limits_{SM}|D^mg(t)|_{_{g(0)}}\leq \int_0^t\sup\limits_{SM}|D^m\omega(\tau)|_{_{g(0)}}d\tau.
 \end{align*}
Finally, by means of (\ref{Def;sup}) we conclude that
 \begin{align}\label{109}
 \sup\limits_{SM}|D^mg(t)|_{_{g(0)}}\leq \int_0^t \hat{u}_m(\tau)d\tau,
 \end{align}
 for all $ t\in[0,T) $.
\begin{lem}\label{Last Lemma}
If $\int_0^Tu_m(t)dt<\infty$, for $m=0,1,2,...$, then $\int_0^T\hat{u}_m(t)dt<\infty$ for
$m=0,1,2,...$\hspace{0.09cm}.
\begin{proof}
By Lemma \ref{A.1}, there exists a positive constant $ A $, such that $ |\omega(t)|_{g_0}\leq A|\omega(t)|_{g(t)} $. By assumption, we have $$\int_0^T\hat{u}_0(t)dt =\int_0^T\sup\limits_{SM}|\omega(t)|_{g_0}dt\leq A\int_0^T\sup\limits_{SM}|\omega(t)|_{g(t)}dt=A\int_0^Tu_0(t)dt<\infty.$$
Therefore the assertion holds for $ m=0 $. The proof is by induction on $m$. Assume for a fix integer $ m\geq 1 $ we have $\int_0^T\hat{u}_l(t)dt<\infty$, for
$l=1,2,...,m-1$. It follows from (\ref{109}) that
$\sup\limits_{t\in [0,T)}\sup\limits_{SM}| D^lg(t)|_{_{g(0)}}<\infty$, for $ l=1,2,...,m-1 $. Moreover, by means of Lemma \ref{A.1} the metrics g(t) are uniformly equivalent. Using Lemma \ref{A.3}, we obtain
\begin{align}\label{ineq}
| D^m\omega(t)|_{_{g(0)}}-&| \nabla^m\omega(t)|_{_{g(0)}}
\leq\\&\qquad C_1\sum_{l=0}^{m-1}\sum_{i_1+...+i_q=m-l}| D^{i_1}g(t)|_{_{g(0)}}...| D^{i_q}g(t)|_{_{g(0)}}| D^l\omega(t)|_{_{g(0)}}\nonumber\\&=C_1\sum_{l=1}^{m-1}\sum_{i_1+...+i_q=m-l}| D^{i_1}g(t)|_{_{g(0)}}...| D^{i_q}g(t)|_{_{g(0)}}| D^l\omega(t)|_{_{g(0)}}\nonumber\\&\quad+C_1\sum_{i_1+...+i_q=m}| D^{i_1}g(t)|_{_{g(0)}}...| D^{i_q}g(t)|_{_{g(0)}}| D^l\omega(t)|_{_{g(0)}}\nonumber,
\end{align}
for a positive constant $ C_1 $. By means of induction's assumption and (\ref{109}), there exists a positive constant $ C_2 $ such that
\begin{align}\label{ineq1}
C_1\sum_{l=1}^{m-1}\sum_{i_1+...+i_q=m-l}| D^{i_1}g(t)|_{_{g(0)}}...| D^{i_q}g(t)|_{_{g(0)}}| D^l\omega(t)|_{_{g(0)}}\leq C_2\sum\limits_{l=1}^{m-1}| D^l\omega(t)|_{_{g(0)}},
\end{align}
and
\begin{align}\label{ineq2}
C_1\sum_{i_1+...+i_q=m}| D^{i_1}g(t)|_{_{g(0)}}...| D^{i_q}g(t)|_{_{g(0)}}| D^l\omega(t)|_{_{g(0)}}\leq C_2\big(1+| D^{m}g(t)|_{_{g(0)}}\big)|\omega(t)|_{_{g(0)}}.
\end{align}
Replacing (\ref{ineq1}) and (\ref{ineq2}) in (\ref{ineq}) we have
\begin{align}\label{BBB}
| D^m\omega(t)|_{_{g(0)}}\leq &| \nabla^m\omega(t)|_{_{g(0)}}+C_2\sum\limits_{l=1}^{m-1}| D^l\omega(t)|_{_{g(0)}}\nonumber\\&+C_2\big(1+| D^{m}g(t)|_{_{g(0)}}\big)|\omega(t)|_{_{g(0)}}.
\end{align}
On the other hand by Lemma \ref{A.1}, we conclude that there exists a positive constant $ C_3 $ such that $ | \nabla^m\omega(t)|_{_{g(0)}} \leq C_3|\nabla^m\omega(t)|_{_{g(t)}}$ and $ |\omega(t)|_{_{g(0)}}\leq C_3|\omega(t)|_{_{g(t)}} $. Therefore the equation (\ref{BBB}) reads
\begin{align*}
| D^m\omega(t)|_{_{g(0)}}&\leq C_3| \nabla^m\omega(t)|_{_{g(t)}}+C_2\sum\limits_{l=1}^{m-1}| D^l\omega(t)|_{_{g(0)}}\\&\quad+
C_2C_3\big(1+| D^{m}g(t)|_{_{g(0)}}\big)|\omega(t)|_{_{g(t)}}\\&\leq C_3\sup\limits_{SM}| \nabla^m\omega(t)|_{_{g(t)}}+C_2\sum\limits_{l=1}^{m-1}\sup\limits_{SM}| D^l\omega(t)|_{_{g(0)}}\\&\quad+C_2C_3\big(1+\sup\limits_{SM}| D^{m}g(t)|_{_{g(0)}}\big)\sup\limits_{SM}|\omega(t)|_{_{g(t)}}.
\end{align*}
Using (\ref{Def;sup}) and (\ref{109}), we have
\begin{align*}
\hat{u}_m(t)\leq C_3u_m(t)+C_2\sum\limits_{l=1}^{m-1}\hat{u}_l(t)
+C_2C_3u_0(t)\Big(1+\int_0^t\hat{u}_m(\tau)d\tau\Big),
\end{align*}
for all $ t\in[0,T) $. This implies
\begin{align}\label{CCC}
\frac{\hat{u}_m(t)}{1+\int_0^t\hat{u}_m(\tau)d\tau}
\leq \frac{C_3u_m(t)+C_2\sum\limits_{l=1}^{m-1}\hat{u}_l(t)}{1+\int_0^t\hat{u}_m(\tau)d\tau}+C_2C_3u_0(t).
\end{align}
On the other hand, $ 1+\int_0^t\hat{u}_m(\tau)d\tau\geq 1 $, hence we have
\begin{align}\label{CCA}
 \frac{C_3u_m(t)+C_2\sum\limits_{l=1}^{m-1}\hat{u}_l(t)}{1+\int_0^t\hat{u}_m(\tau)d\tau}\leq C_3u_m(t)+C_2\sum\limits_{l=1}^{m-1}\hat{u}_l(t).
\end{align}
Moreover, Replacing $ \frac{d}{dt}\ln\Big( 1+\int_0^t\hat{u}_m(\tau)d\tau\Big)= \frac{\hat{u}_m(t)}{1+\int_0^t\hat{u}_m(\tau)d\tau}$ in (\ref{CCC}) and using (\ref{CCA}), we conclude that
\begin{align}\label{DDD}
\frac{d}{dt}\ln\Big( 1+\int_0^t\hat{u}_m(\tau)d\tau\Big)
\leq C_3u_m(t)+C_2\sum\limits_{l=1}^{m-1}\hat{u}_l(t)+C_2C_3u_0(t),
\end{align}
for all $ t\in[0,T) $. Integration of two sides of (\ref{DDD}), yields
\begin{align*}
\ln\Big( 1+\int_0^t\hat{u}_m(\tau)d\tau\Big)&\leq C_3\int_0^tu_m(\tau)d\tau+C_2\sum\limits_{l=1}^{m-1}\int_0^t\hat{u}_l(\tau)d\tau
+C_2C_3\int_0^tu_0(\tau)d\tau\\&\leq C_3\int_0^Tu_m(\tau)d\tau+C_2\sum\limits_{l=1}^{m-1}\int_0^T\hat{u}_l(\tau)d\tau
+C_2C_3\int_0^Tu_0(\tau)d\tau.
\end{align*}
By assumption, we have $\int_0^Tu_0(\tau)d\tau<\infty$ and $\int_0^Tu_m(\tau)d\tau<\infty$. Moreover, the induction hypothesis implies that $\int_0^T\hat{u}_l(\tau)d\tau<\infty$ for $l=1,2,...,m-1$. Therefore, there exists a positive constant $ D $, such that
\begin{align*}
\ln\Big( 1+\int_0^t\hat{u}_m(\tau)d\tau\Big)&\leq D.
\end{align*}
 Finally, we conclude that
 \begin{align*}
 \int_0^T\hat{u}_m(\tau)d\tau\leq \exp(D)-1<\infty,
 \end{align*}
 as we have claimed.
\end{proof}
\end{lem}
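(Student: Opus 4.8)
The plan is to argue by induction on $m$, transferring integrability from the intrinsic derivatives $\nabla^m\omega$ to the background derivatives $D^m\omega$ by means of the comparison identity of Lemma \ref{A.3}, and to absorb the unavoidable self-referential top-order term through a logarithmic Gronwall estimate. Throughout, the uniform equivalence of the metrics supplied by Lemma \ref{A.1} will let me freely pass between the norms $|\cdot|_{g(t)}$ and $|\cdot|_{g_0}$ at the cost of a fixed multiplicative constant.

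For the base case $m=0$, I would apply Lemma \ref{A.1} directly: since the metrics are uniformly equivalent on $SM$, one has $\hat u_0(t)=\sup_{SM}|\omega(t)|_{g_0}\le A\sup_{SM}|\omega(t)|_{g(t)}=A\,u_0(t)$, so $\int_0^T\hat u_0\le A\int_0^T u_0<\infty$ is immediate from the hypothesis.

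For the inductive step I would assume $\int_0^T\hat u_l<\infty$ for $l=1,\dots,m-1$; by (\ref{109}) this already yields a uniform bound on $\sup_{SM}|D^lg(t)|_{g_0}$ for every $l\le m-1$. Taking $g_0$-norms in Lemma \ref{A.3} and using this bound, every product $\prod_j|D^{i_j}g|_{g_0}$ with all indices $i_j\le m-1$ becomes a constant; the only term escaping this control is the $q=1$ piece of the $l=0$ summand, which carries the top-order factor $|D^mg|_{g_0}$. Isolating it produces a pointwise estimate of the schematic form $|D^m\omega|_{g_0}\le|\nabla^m\omega|_{g_0}+C\sum_{l=1}^{m-1}|D^l\omega|_{g_0}+C\bigl(1+|D^mg|_{g_0}\bigr)|\omega|_{g_0}$. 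Converting the first and last norms back to $|\cdot|_{g(t)}$ via Lemma \ref{A.1}, and invoking (\ref{109}) to replace $\sup_{SM}|D^mg|_{g_0}$ by $\int_0^t\hat u_m$, I arrive at the differential inequality $\hat u_m(t)\le C_3u_m(t)+C_2\sum_{l=1}^{m-1}\hat u_l(t)+C_2C_3\,u_0(t)\bigl(1+\int_0^t\hat u_m\bigr)$.

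The hard part is precisely that this bound for $\hat u_m$ contains its own running integral $\int_0^t\hat u_m$, so a direct estimate is circular. I would break the loop by dividing through by $1+\int_0^t\hat u_m\ge 1$ and recognizing the resulting left-hand side as $\frac{d}{dt}\ln\bigl(1+\int_0^t\hat u_m\bigr)$; since the denominator is at least $1$, the remaining terms lose their denominator harmlessly, leaving $\frac{d}{dt}\ln\bigl(1+\int_0^t\hat u_m\bigr)\le C_3u_m(t)+C_2\sum_{l=1}^{m-1}\hat u_l(t)+C_2C_3u_0(t)$. Integrating over $[0,t]$ and bounding each term by its integral over all of $[0,T)$—finite by hypothesis for $u_0$ and $u_m$ and by the induction hypothesis for the $\hat u_l$—shows $\ln\bigl(1+\int_0^t\hat u_m\bigr)$ stays uniformly bounded on $[0,T)$, hence $\int_0^T\hat u_m<\infty$, closing the induction.
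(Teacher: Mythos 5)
Your proposal is correct and follows essentially the same route as the paper's own proof: the base case via the uniform equivalence of Lemma \ref{A.1}, the inductive step via the comparison identity of Lemma \ref{A.3} with the top-order factor $|D^mg|_{g_0}$ isolated from the $l=0$, $q=1$ term, and the self-referential integral absorbed by the same logarithmic Gronwall device $\frac{d}{dt}\ln\bigl(1+\int_0^t\hat u_m\bigr)$. There is nothing substantive to add or correct.
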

\begin{defn}
\cite{HA}Let $ E$ be a vector bundle over a manifold $M$, and $ \Omega\subseteq M $ an open set with compact closure $ \bar{\Omega} $ in $ M $. We say that a sequence of sections $ \{\xi_k\} $ of $E$ converges in $C^p$ to $ \xi_{\infty}\in \Gamma(E|_{\bar{\Omega}}) $, for any $p\geq 0$ if for every $\epsilon>0  $ there exists $k_0 = k_0(\epsilon)$ such that
$$\sup\limits_{0\leq\alpha\leq p}\sup\limits_{x\in \bar{\Omega}}|D^{\alpha}(\xi_k-\xi_{\infty})|<\epsilon,$$
whenever, $ k> k_0 $. We say that $\{\xi_k\} $ converges in $ C^{\infty} $ to $ \xi_{\infty} $ on $ \bar{\Omega} $ if $ \{\xi_k\} $ converges in $ C^{p} $ to $ \xi_{\infty} $ on $ \bar{\Omega} $ for every $ p\in \mathbb{N} $.
\end{defn}
\begin{thm}\label{A.5}
Let $ (M,g_0) $ be a compact Finslerian manifold and $ g(t) $ solutions of the evolution equation (\ref{evolving}). If $\int_0^Tu_m(t)dt<\infty$, for $ m=0,1,2,... $, then $g(t)$ converge in $C^{\infty}$ to a smooth limit Finslerian metric $ \bar{g} $ whenever $ t $ approaches to $ T $.
\begin{proof}
We have $ \int_0^Tu_0(t)dt<\infty $, so $ \int_0^T|\omega(t)|_{_{g(t)}}dt<\infty $. By means of Lemma \ref{A.1}, there exists a positive constant $ C $, such that $ |\omega(t)|_{_{g(0)}}\leq C|\omega(t)|_{_{g(t)}} $. So we have $ \int_0^T|\omega(t)|_{_{g(0)}}dt<\infty $. Now we are in a position to prove that the metrics $g(t)$ converge to a $0$-homogenous symmetric $ (0,2)- $tensor $ \bar{g}:=g(0)+\int_0^T\omega(\tau)d\tau $, whenever $ t $ approaches to $ T $. To this end we show $\lim\limits_{t\rightarrow T} g(t)=\bar{g} $, that is,
\begin{align}\label{Defhad}
\forall \epsilon>0,\quad \exists \delta>0,\ \ \textrm{if}\ \ |t-T|<\delta \ \ \textrm{then}\ \ \sup\limits_{SM} |g(t)-\bar{g}|_{g_0}<\epsilon ,
\end{align}
where $ |.|_{g_0} $ is the norm with respect to the $ g_0 $.
 By definition of $ \bar{g} $ and (\ref{Evol}) we have
\begin{align}\label{had}
 |g(t)-\bar{g}|_{_{g(0)}}&= |(g(t)-g(0))-\int_0^T\omega(\tau)d\tau)|_{_{g(0)}}\nonumber\\&=|\int_0^t\omega(\tau)d\tau-\int_0^T\omega(\tau)d\tau|_{_{g(0)}}\leq \int_t^T|\omega(\tau)|_{_{g(0)}}d\tau\nonumber\\&\leq \int_t^T\sup\limits_{SM}|\omega(\tau)|_{_{g(0)}}d\tau.
\end{align}
On the other hand, by the mean value theorem for integrals, there exists a constant $ c $, $t<c<T $, such that $ \int_t^T\sup\limits_{SM}|\omega(\tau)|_{_{g_0}}d\tau=\sup\limits_{SM}|\omega(c)|_{_{g_0}}(T-t) $. By means of (\ref{had}) we have
$$ \sup\limits_{SM} |g(t)-\bar{g}|_{_{g_0}}\leq \sup\limits_{SM}|\omega(c)|_{_{g_0}}(T-t)=\sup\limits_{SM}|\omega(c)|_{_{g_0}}|t-T|.$$
Assuming $\delta<\frac{\epsilon}{\sup\limits_{SM}|\omega(c)|_{g_0}}$ we obtain (\ref{Defhad}). Similarly we prove $ \lim\limits_{t\rightarrow T}D^mg(t)=D^m\bar{g} $.
 By Lemma \ref{Last Lemma}, $ m^{th} $ order derivative of relation (\ref{Evol}) and Definition \ref{Def;sup}, we have
\begin{align*}
&|D^mg(t)-D^m\bar{g}|_{_{g_0}}=|D^mg(t)-\int_0^TD^m\omega(\tau)d\tau|_{_{g_0}}\\&=|\int_0^tD^m\omega(\tau)d\tau-\int_0^TD^m\omega(\tau)d\tau|_{_{g_0}}\leq \int_t^T|D^m\omega(\tau)|_{_{g_0}}d\tau\\&\leq \int_t^T\hat{u}_m(\tau)d\tau.
\end{align*}
Again by the mean value theorem for integrals there is a constant $ t<d<T $ for which
\begin{align*}
\int_t^T\hat{u}_m(\tau)d\tau=\hat{u}_m(d)(T-t)=\hat{u}_m(d)|t-T|,
\end{align*}
Assuming $ \delta<\frac{\epsilon}{\hat{u}_m(d)} $ we conclude that the $ m^{th} $ covariant derivative $ D^mg(t) $ converge to $ D^m\bar{g} $ whenever $ t $ approaches to $ T $. Finally, $g(t)$ converges in $C^{\infty}$ to a symmetric $ (0,2)- $tensor field $ \bar{g} $, whenever $ t $ approaches to $ T $. Moreover, we prove that $ \bar{g} $ is positive definite. Lemma \ref{A.1}, shows that there exists a positive constant $ B $, such that
$$0\leq\frac{1}{B}{g}_{ij}(0)v^iv^j\leq g_{ij}(t)v^iv^j\leq B{g}_{ij}(0)v^iv^j, \qquad \forall t\in [0,T).$$
Since $g(t)$ converges to a symmetric $ (0,2)- $tensor field $ \bar{g} $ whenever $ t $ approaches to $ T $, we have
$$0\leq\frac{1}{B}{g}_{ij}(0)v^iv^j\leq \bar{g}_{ij}v^iv^j\leq B{g}_{ij}(0)v^iv^j.$$
Next, assume that $ \bar{g}_{ij}v^iv^j=0 $. The last inequality yields $ {g}_{ij}(0)v^iv^j=0 $ and by positive definiteness of  $ g_{ij}(0) $ we get $ v=0 $. Therefore $ \bar{g} $ is positive definite. Next, we find a Finsler structure $ \bar{F} $ on $ M $ such that
$$\bar{g}_{kl}=\frac{1}{2}\frac{\partial^2\bar{F}^2}{\partial y^k\partial y^l}.$$
 For this purpose, multiplying $ y^i $ and $ y^j $ to the definition of $\bar g_{ij}$ yields
$$y^iy^j\bar{g}_{ij}=y^iy^jg_{ij}(0)+\int_0^Ty^iy^j\omega_{ij}(\tau)d\tau. $$
By means of  $ y^iy^jg_{ij}(0)=F^2(0) $ we get
\begin{align}\label{MEvol}
y^iy^j\bar{g}_{ij}=F^2(0)+\int_0^Ty^iy^j\omega_{ij}(\tau)d\tau.
\end{align}
By positive definiteness  of $\bar{g}_{ij}$ we put $ \bar{F}=(y^iy^j\bar{g}_{ij})^{\frac{1}{2}} $, which is $C^\infty$ in $TM_0$, by definition of $\bar g$.  Twice vertical derivatives  of  (\ref{MEvol}) yields
\begin{align}\label{Der}
\frac{1}{2}\frac{\partial^2\bar{F}^2}{\partial y^k\partial y^l}=g_{kl}(0)+\frac{1}{2}\int_0^T\frac{\partial^2}{\partial y^k\partial y^l}(y^iy^j\omega_{ij}(\tau))d\tau.
\end{align}
On the other hand, by straight forward calculation we have
\begin{align}\label{Der1}
\frac{1}{2}\frac{\partial^2}{\partial y^k\partial y^l}(y^iy^j\omega_{ij}(\tau))=\frac{1}{2}\frac{\partial^2\omega_{ij}(\tau)}{\partial y^k\partial y^l}y^iy^j+(\frac{\partial\omega_{ik}(\tau)}{\partial y^l}-\frac{\partial\omega_{il}(\tau)}{\partial y^k})y^i+\omega_{kl}(\tau),
\end{align}
for all $ \tau\in [0,T) $. Using (\ref{evolving}) we obtain
$$ \frac{1}{2}\frac{\partial^2\omega_{ij}(\tau)}{\partial y^k\partial y^l}y^iy^j=0,\qquad \frac{\partial\omega_{ik}(\tau)}{\partial y^l}y^i=0, \qquad  \frac{\partial\omega_{il}(\tau)}{\partial y^k}y^i=0.$$
Therefore (\ref{Der1}) reduces to
\begin{align}\label{Der2}
\frac{1}{2}\frac{\partial^2}{\partial y^k\partial y^l}(y^iy^j\omega_{ij}(\tau))=\omega_{kl}(\tau),
\end{align}
for all $ \tau\in [0,T) $.
Finally, replacing (\ref{Der2}) in (\ref{Der}) we get the $0$-homogenous Finslerian metric
\begin{align*}
\frac{1}{2}\frac{\partial^2\bar{F}^2}{\partial y^k\partial y^l}=g_{kl}(0)+\int_0^T\omega_{kl}(\tau)d\tau=\bar{g}_{kl}.
\end{align*}
As we have claimed.
\end{proof}
\end{thm}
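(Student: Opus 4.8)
The plan is to produce the limit explicitly as the tensor $\bar g := g(0) + \int_0^T \omega(\tau)\,d\tau$ and then check, in turn, that this integral converges, that $g(t)\to\bar g$ in every $C^m$-norm, and finally that $\bar g$ is the fundamental tensor of a genuine Finsler structure. The entire argument hinges on converting the hypothesis, which is phrased through the \emph{time-dependent} Cartan quantities $u_m(t)=\sup_{SM}|\nabla^m\omega(t)|_{g(t)}$, into integrability of the \emph{fixed} background quantities $\hat u_m$. That conversion is precisely Lemma \ref{Last Lemma}, so I would invoke it first to obtain $\int_0^T\hat u_m(t)\,dt<\infty$ for every $m$; in particular, together with Lemma \ref{A.1}, this makes $\bar g$ well defined.

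For the convergence itself I would work entirely with respect to the fixed metric $g_0$ and the fixed connection $D$. From (\ref{Evol}) one has $g(t)-\bar g=-\int_t^T\omega(\tau)\,d\tau$, so $\sup_{SM}|g(t)-\bar g|_{g_0}\le\int_t^T\sup_{SM}|\omega|_{g_0}\,d\tau$, and since Lemma \ref{A.1} bounds $|\omega|_{g_0}$ by a multiple of $|\omega|_{g(t)}$ with $\int_0^T u_0<\infty$, the tail integral vanishes as $t\to T$; the mean value theorem for integrals turns this into the desired $\epsilon$-$\delta$ statement. Differentiating (\ref{Evol}) $m$ times with $D$ and bounding $\int_t^T|D^m\omega|_{g_0}\,d\tau\le\int_t^T\hat u_m(\tau)\,d\tau$ gives the same conclusion for every derivative, hence $C^\infty$-convergence to a smooth symmetric $(0,2)$-tensor. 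Positive definiteness of the limit then follows by passing to the limit in the uniform two-sided bound $\tfrac1B g_0\le g(t)\le B g_0$ supplied by Lemma \ref{A.1}: if $\bar g_{ij}v^iv^j=0$ then $g_{ij}(0)v^iv^j=0$, forcing $v=0$.

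The step I expect to be the genuine obstacle is the last one: a positive-definite $0$-homogeneous symmetric tensor need not, a priori, be the Hessian of the square of any Finsler function. The natural candidate is $\bar F:=(y^iy^j\bar g_{ij})^{1/2}$, which is manifestly positively $1$-homogeneous and, by positive definiteness of $\bar g$, smooth on $TM_0$. The delicate point is to verify that $\tfrac12\partial_{y^k}\partial_{y^l}\bar F^2=\bar g_{kl}$ exactly. Computing the Hessian of $y^iy^j\bar g_{ij}$ produces, besides the wanted term $\omega_{kl}$, correction terms in which $\partial_y\omega$ and $\partial_y^2\omega$ are contracted with $y$. These vanish precisely because $\omega$ is $0$-homogeneous in $y$, whence $y^i\partial_{y^i}\omega_{jk}=0$ by Euler's relation, and because $\partial_{y^k}\omega_{ij}$ is totally symmetric (being the $t$-derivative of twice the Cartan tensor), which upgrades this to $y^i\partial_{y^l}\omega_{ik}=0$ as well. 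Feeding these identities back into the Hessian computation leaves exactly $\bar g_{kl}$, so $\bar F$ is the Finsler structure whose fundamental tensor is the limit $\bar g$.
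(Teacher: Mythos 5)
Your proposal is correct and follows essentially the same route as the paper: the explicit limit $\bar g=g(0)+\int_0^T\omega(\tau)\,d\tau$, tail estimates via Lemma \ref{A.1} and Lemma \ref{Last Lemma} for $C^m$-convergence, positive definiteness by passing to the limit in the uniform equivalence bounds, and the candidate $\bar F=(y^iy^j\bar g_{ij})^{1/2}$ verified through the Hessian computation. Your justification of the vanishing correction terms --- total symmetry of $\partial_{y}\omega$ as the $t$-derivative of twice the Cartan tensor, combined with Euler's relation for $0$-homogeneity --- is in fact more explicit than the paper's terse appeal to (\ref{evolving}), but it is the same argument.
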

\begin{cor}
Let $ (M,g_0) $ be a compact Finslerian manifold and $ g(t) $ solutions of the Ricci flow (\ref{FRF}). If $\int_0^T\sup\limits_{SM}|\nabla^m Ric_{g(t)}|_{g(t)}dt<\infty $, for all non-negative integers $m$, then $g(t)$ converges in $C^{\infty}$ to a smooth limit Finslerian metric $ \bar{g} $ whenever $ t $ approaches to $ T $.
\end{cor}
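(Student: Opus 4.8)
The plan is to exhibit the Finslerian Ricci flow (\ref{FRF}) as a particular instance of the general evolution equation (\ref{evolving}) and then to invoke Theorem \ref{A.5} essentially verbatim. Concretely, I would set $\omega(t):=-2Ric_{g(t)}$, so that the Ricci flow $\frac{\partial}{\partial t}g_{jk}=-2Ric_{jk}$, $g(0)=g_0$, becomes exactly the geometric flow $\frac{\partial}{\partial t}g(t)=\omega(t)$, $g(0)=g_0$, treated in Section~3. The whole corollary then reduces to checking that this $\omega(t)$ is an admissible right-hand side and that the corollary's integral hypothesis is precisely the hypothesis of the theorem.

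Before applying the theorem I would verify that $\omega(t)=-2Ric_{g(t)}$ meets the structural requirements imposed on $\omega$ in (\ref{evolving}); namely, that it is a symmetric $(0,2)$-tensor on $\pi^*TM$ that is $0$-homogeneous in $y$. Symmetry is immediate from $Ric_{jk}=[\tfrac12 F^2\mathcal{R}ic]_{y^jy^k}$, since mixed second vertical partials commute. For the homogeneity, I would recall that the spray coefficients $G^i$ are $2$-homogeneous in $y$, so each term in (\ref{E,Ricci scalar}) is $2$-homogeneous and, after division by $F^2$, the reduced curvature $R^i_k$ and hence $\mathcal{R}ic=R^i_i$ are $0$-homogeneous; thus $F^2\mathcal{R}ic$ is $2$-homogeneous, and the two vertical derivatives defining $Ric_{jk}$ lower the degree back to $0$. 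This places $\omega(t)=-2Ric_{g(t)}$ squarely within the setting of Section~3.

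The decisive observation is that the integrability hypotheses of Theorem \ref{A.5} follow directly from the assumption of the corollary. Since the Cartan horizontal covariant derivative $\nabla$ is linear, $\nabla^m\omega(t)=-2\nabla^mRic_{g(t)}$, whence $u_m(t)=\sup_{SM}|\nabla^m\omega(t)|_{g(t)}=2\sup_{SM}|\nabla^mRic_{g(t)}|_{g(t)}$ for every $m\ge 0$. Consequently
\[
\int_0^T u_m(t)\,dt=2\int_0^T\sup_{SM}|\nabla^mRic_{g(t)}|_{g(t)}\,dt<\infty
\]
for all $m=0,1,2,\dots$, which is exactly the condition $\int_0^Tu_m(t)\,dt<\infty$ required by Theorem \ref{A.5}. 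With all hypotheses confirmed, Theorem \ref{A.5} applies and produces a smooth $0$-homogeneous limit metric $\bar g=g_0-2\int_0^TRic_{g(\tau)}\,d\tau$ to which $g(t)$ converges in $C^\infty$ as $t\to T$, which is the assertion.

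I do not anticipate a genuine obstacle, because the corollary is a specialization rather than a new estimate: the substantive analytic work has already been carried out in Lemma \ref{A.1} through Theorem \ref{A.5}. The only content is the bookkeeping that $-2Ric_{g(t)}$ satisfies the symmetry and $0$-homogeneity demanded of $\omega$, together with the matching of the curvature integral to $u_m$. The one step requiring a little care is the homogeneity verification, since it rests on the precise homogeneity degrees of $G^i$, $F^2$, and the vertical derivatives entering $Ric_{jk}$; everything else is an immediate translation into the hypotheses of Theorem \ref{A.5}.
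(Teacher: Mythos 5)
Your proposal is correct and follows essentially the same route as the paper: substitute $\omega(t)=-2Ric_{g(t)}$ into the general flow (\ref{evolving}), observe that $u_m(t)=2\sup_{SM}|\nabla^m Ric_{g(t)}|_{g(t)}$ so the corollary's hypothesis is exactly $\int_0^T u_m(t)\,dt<\infty$, and apply Theorem \ref{A.5}. Your explicit verification that $-2Ric_{g(t)}$ is a symmetric, $0$-homogeneous $(0,2)$-tensor on $\pi^*TM$ (via the $2$-homogeneity of the spray coefficients $G^i$) is a point the paper merely asserts, so your write-up is if anything slightly more complete.
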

\bpf
 In the equation (\ref{Def;sup}),  $\omega(t)$ is a family of $0$-homogenous symmetric $(0,2)$-tensors on $ \pi^*TM $. Replacing  $ \omega(t) $ by
 $-2Ric_{g(t)}$  leads to
$$\int_0^Tu_m(t)dt=\int_0^T\sup\limits_{SM}|\nabla^m\omega(t)|_{g(t)}dt=2\int_0^T\sup\limits_{SM}|\nabla^m Ric_{g(t)}|_{g(t)}dt.$$
Since $\int_0^T\sup\limits_{SM}|\nabla^m Ric_{g(t)}|_{g(t)}dt<\infty$, we have $\int_0^Tu_m(t)dt<\infty$, for $ m=0,1,2,\cdots$. By Theorem \ref{A.5}, $g(t)$ converge in $C^{\infty}$ to a smooth limit Finslerian metric $ \bar{g} $ whenever $t$ approaches to $T$. This completes the proof.
\epf


\end{document}